\newtheorem{theorem}{Theorem}[section]
\newtheorem{lemma}[theorem]{Lemma}
\newtheorem{corollary}[theorem]{Corollary}
\theoremstyle{definition}
\newtheorem{definition}[theorem]{Definition}
\newtheorem{example}[theorem]{Example}
\numberwithin{equation}{section}
\newcommand\R {{\mathbb R}}
\newcommand\Z {{\mathbb Z}}
\newcommand\dist {\mathrm{dist}}
\author{Mikhail G. Katz}\address{M. Katz, Department of Mathematics,
Bar Ilan University, Ramat Gan 52900 Israel}
\email{katzmik@macs.biu.ac.il}
\begin{document}

\thispagestyle{empty}


\title{Torus cannot collapse to a segment}

\begin{abstract}
In earlier work, we analyzed the impossibility of co\-dimen\-sion-one
collapse for surfaces of negative Euler characteristic under the
condition of a lower bound for the Gaussian curvature.  Here we show
that, under similar conditions, the torus cannot collapse to a
segment.  Unlike the torus, the Klein bottle can collapse to a
segment; we show that in such a situation, the loops in a short basis
for homology must stay a uniform distance apart.
\end{abstract}

\maketitle

\section{Introduction}
\label{ss1}

In earlier work \cite{Ka19}, we analyzed the impossibility of
codimension-one collapse for surfaces of negative Euler
characteristic, if Gaussian curvature is bounded from below and the
diameter from above.  Here we show that, under similar conditions, the
torus cannot collapse to a segment, answering a question posed at MO
\cite{Al19}.  For a general framework for collapse of Riemannian
manifolds, see \cite{r5}, \cite{r6}, \cite[Section 8.25]{Gr99}.

Recall that an embedding~$N\subseteq M$ of Riemannian manifolds is
called \emph{strongly isometric} if the ambient distance and the
intrinsic distance in~$N$ coincide.

\begin{lemma}
\label{l11}
Let~$M$ be a closed surface with a Riemannian metric.  Consider a
homology basis
\begin{equation}
\label{e11}
[\alpha_1],\ldots,[\alpha_r] \in H_1(M;\Z_2)
\end{equation}
such that~$|\alpha_1|\leq\ldots\leq|\alpha_r|$ and the sum of the
lengths of the loops~$\alpha_i$ is minimal.  Then each of the
inclusions~$\alpha_1 \subseteq M, \ldots, \alpha_r\subseteq M$ is
strongly isometric.
\end{lemma}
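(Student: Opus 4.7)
The plan is a length-reducing exchange argument by contradiction. Suppose some loop $\alpha = \alpha_i$ in the shortest basis fails to be strongly isometrically embedded. Then there exist points $p, q \in \alpha$ with $d_M(p,q) < d_\alpha(p,q)$. The pair $\{p,q\}$ splits $\alpha$ into two arcs $\beta_1, \beta_2$ with $|\beta_1| + |\beta_2| = |\alpha_i|$, and after relabeling so that $|\beta_1| \leq |\beta_2|$ we have $d_\alpha(p,q) = |\beta_1|$. Choose a shortest path $\gamma$ in $M$ from $p$ to $q$, of length $|\gamma| = d_M(p,q) < |\beta_1|$.

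Next, build two new loops by surgery along $\gamma$, namely the concatenations $c_j = \gamma \cdot \beta_j$ for $j = 1,2$. Because $\gamma$ appears in both $c_1$ and $c_2$, it cancels mod $2$, yielding the homological relation
\begin{equation*}
[c_1] + [c_2] = [\alpha_i] \quad \text{in } H_1(M;\Z_2).
\end{equation*}
At the same time, using $|\gamma| < |\beta_1| \leq |\beta_2|$,
\begin{equation*}
|c_1| = |\gamma| + |\beta_1| < 2|\beta_1| \leq |\alpha_i|, \qquad |c_2| = |\gamma| + |\beta_2| < |\beta_1| + |\beta_2| = |\alpha_i|,
\end{equation*}
so both new loops are strictly shorter than $\alpha_i$.

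Since $[\alpha_i]$ is not in the $\Z_2$-span of $\{[\alpha_k] : k \neq i\}$ but equals $[c_1] + [c_2]$, at least one of $[c_1], [c_2]$ fails to lie in that span either, for otherwise their sum would also lie there. Replacing $\alpha_i$ in the basis by whichever of $c_1, c_2$ gives an independent class produces a new homology basis whose total length is strictly less than $\sum_k |\alpha_k|$, contradicting the minimality assumption.

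The one subtlety worth flagging is that $c_1, c_2$ are figure-eight type loops rather than simple closed curves; but the statement is phrased in terms of lengths of loops representing homology classes, and concatenations are legitimate loops (maps $S^1 \to M$) of the indicated length, so the exchange step goes through unchanged. The rest of the argument is purely the matroid-style observation that in any minimum-weight basis of a vector space, no basis element can be rewritten as a sum of two strictly shorter cycles.
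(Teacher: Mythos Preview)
Your argument is correct and essentially identical to the paper's own proof: both take points $p,q$ witnessing the failure of strong isometry, shortcut via a minimizing path $\gamma$ to build two shorter loops summing to $[\alpha_i]$ in $H_1(M;\Z_2)$, and then swap one of them into the basis to contradict minimality. The only cosmetic differences are notation and your (harmless) remark about figure-eight loops; the paper additionally observes that \emph{exactly} one of the two new classes lies outside the hyperplane spanned by the remaining $[\alpha_k]$, whereas your ``at least one'' is already enough.
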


\begin{proof}
We argue by contradiction.  Suppose that an inclusion~$\alpha_i
\subseteq M$ is not strongly isometric.  Then there are
points~$x,y\in\alpha_i$ partitioning~$\alpha_i$ into arcs~$\alpha_i'$,
$\alpha_i''$ such that a minimizing path~$\eta$ between~$x$ and~$y$ is
shorter than either arc.  Consider the loops~$\alpha_i'\cup\eta$
and~$\alpha_i''\cup\eta$ obtained as union of paths.  These loops are
shorter than~$\alpha_i$.  In the homology group, we have the relation
\begin{equation*}
[\alpha_i'\cup\eta] + [\alpha_i''\cup\eta] = [\alpha_i] \in H_1(M;\Z_2),
\end{equation*}
where orientations can be ignored since we are working with
$\Z_2$-coeffi\-cients.  Exactly one of the new loops,
say~$\alpha_i'\cup\eta$, lies outside the hyperplane spanned by the
classes of~$(\alpha_1,\ldots,\alpha_{i-1},
\alpha_{i+1},\ldots,\alpha_r)$.  Since we are working with
$\Z_2$-coefficients, the family of loops
\[
(\alpha_1,\ldots,\alpha_{i-1},{\alpha_i'\cup\eta},\alpha_{i+1},\ldots,\alpha_r)
\]
is a basis for homology.  By construction, the sum of the lengths of
this new family of loops is smaller than the sum of the lengths of the
original family~\eqref{e11}, contradicting the definition of that
family and proving the lemma.
\end{proof}

\begin{corollary}
\label{l12}
When~$M=T^2$ is the~$2$-torus, the loops~$\alpha=\alpha_1$ and
$\beta=\alpha_2$ represent elements of a basis of~$H_1(T^2;\Z)$ and
meet at a single point.
\end{corollary}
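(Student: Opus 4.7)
The plan is first to invoke Lemma~\ref{l11}: since $(\alpha_1,\alpha_2)$ is a minimal homology basis, both inclusions are strongly isometric, and standard arguments show that strong isometry forces each $\alpha_i$ to be a simple closed geodesic (a self-intersection gives two points at zero ambient but positive intrinsic distance, and a corner gives a chord strictly shorter than the arc). Since $\{[\alpha_1],[\alpha_2]\}$ is a $\Z_2$-basis, the mod-$2$ intersection pairing gives $[\alpha_1]\cdot[\alpha_2]=1$, so the geometric intersection number $k=|\alpha_1\cap\alpha_2|$ is odd. Once I establish $k=1$, the corollary follows: a simple non-nullhomotopic closed curve on $T^2$ represents a primitive class in $H_1(T^2;\Z)$, and two primitive classes meeting transversally in one point have algebraic intersection $\pm 1$, hence form a $\Z$-basis.

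To rule out $k\ge 3$, I argue by contradiction. Let $\gamma$ be a shortest arc into which the intersection points divide $\alpha_2$; then $|\gamma|\le|\alpha_2|/k\le|\alpha_2|/3$, with endpoints $p,q\in\alpha_1$. Strong isometry of $\alpha_2$ forces the shorter arc of $\alpha_2$ between $p$ and $q$ to be $\gamma$ and gives $d_M(p,q)=|\gamma|$; strong isometry of $\alpha_1$ then forces the shorter arc $\alpha_1'$ of $\alpha_1$ between $p$ and $q$ to have length $|\alpha_1'|=d_M(p,q)=|\gamma|$. Thus $\gamma$ and $\alpha_1'$ are distinct minimizing geodesics from $p$ to $q$ of equal length (distinct since $\alpha_1\cap\alpha_2$ is finite), and the simple closed curve $L=\gamma\cup\alpha_1'$ satisfies $|L|=2|\gamma|\le 2|\alpha_2|/3<|\alpha_2|$ and $|L|=2|\alpha_1'|\le|\alpha_1|$.

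I then split on $[L]\in H_1(T^2;\Z_2)$. If $[L]\in\{[\alpha_2],[\alpha_1]+[\alpha_2]\}$, then $(\alpha_1,L)$ is a basis of strictly smaller total length, contradicting minimality; if $[L]=[\alpha_1]$ with $|L|<|\alpha_1|$, the analogous contradiction comes from $(L,\alpha_2)$. The delicate cases are the two equalities: $[L]=[\alpha_1]$ with $|L|=|\alpha_1|$, in which case $(L,\alpha_2)$ is another minimal basis; and $[L]=0$, in which the loop $\alpha_2^{\mathrm{new}}=(\alpha_2\setminus\gamma)\cup\alpha_1'$ has class $[\alpha_2]+[L]=[\alpha_2]$ and length $|\alpha_2|-|\gamma|+|\alpha_1'|=|\alpha_2|$, so $(\alpha_1,\alpha_2^{\mathrm{new}})$ is also minimal.

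The main obstacle is closing these equality cases, for which I plan a corner argument. At each of $p$ and $q$ the substitute loop ($L$ or $\alpha_2^{\mathrm{new}}$) transitions between a subarc of $\alpha_1$ and a subarc of $\alpha_2$, so its incoming and outgoing unit tangents lie along $\alpha_1$ and along $\alpha_2$ respectively and cannot be collinear. Indeed, two geodesics sharing a tangent line at a point coincide as sets, which would force $[\alpha_1]=[\alpha_2]$ modulo~$2$ and contradict the basis property; hence every intersection of $\alpha_1$ with $\alpha_2$ is transverse. At a genuine corner one can strictly shortcut: for $x,y$ on the two branches close to the corner, $d_M(x,y)<d_L(x,y)$, violating the strong isometry that Lemma~\ref{l11} guarantees for the substitute loop in its new minimal basis. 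This rules out $k\ge 3$ and completes the proof.
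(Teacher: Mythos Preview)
Your argument is correct and lands on the same arc-swap-plus-corner-rounding idea as the paper, but the execution is genuinely different. The paper picks any two intersection points, forms the loop $\alpha_0\cup\beta_0$ from the shortest arcs of each curve, and in the non-null case passes through an auxiliary minimizer $\delta$ in the class $[\alpha_0\cup\beta_0]$ before comparing $|\beta_0|$ with the complementary arc of $\alpha$. You instead take $\gamma$ to be the shortest of the $k\ge3$ arcs of $\alpha_2$ and use the strong isometry of \emph{both} loops from Lemma~\ref{l11} to force the exact equality $|\alpha_1'|=d_M(p,q)=|\gamma|$; this immediately gives $|L|\le|\alpha_1|$ and $|L|<|\alpha_2|$ and makes the four-case split on $[L]\in H_1(T^2;\Z_2)$ very short. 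Your treatment of the equality cases---re-invoking Lemma~\ref{l11} for the substitute basis $(L,\alpha_2)$ or $(\alpha_1,\alpha_2^{\mathrm{new}})$ and observing that a transverse corner is incompatible with strong isometry---makes explicit what the paper compresses into the phrase ``rounding off the corners.'' What your route buys is a cleaner bookkeeping (no auxiliary $\delta$); what the paper's route buys is that it never needs to know $|\alpha_1'|=|\gamma|$ exactly.

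One minor slip: you call $L=\gamma\cup\alpha_1'$ a \emph{simple} closed curve, but the interior of $\alpha_1'$ may well contain further points of $\alpha_1\cap\alpha_2$, so $L$ need not be embedded. Nothing in your argument actually uses simplicity of $L$, so this does not affect correctness.
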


\begin{proof}
By definition,
\begin{equation}
\label{e12}
|\alpha| \leq |\beta|.
\end{equation}
Since the classes~$[\alpha], [\beta]\in H_1(T^2;\Z_2)$ are distinct,
they have nonzero algebraic intersection and therefore nontrivial
geometric intersection.  To prove that~$\alpha$ and~$\beta$ also
represent a basis of~$H_1(T^2;\Z)$, it suffices to show that the
intersection~$\alpha\cap\beta$ consists of a single point.

We will argue by contradiction.  Suppose there are two distinct points
$x,y$ in~$\alpha\cap\beta$.  Let~$\alpha_0$ be a shortest arc of
$\alpha$ connecting~$x$ and~$y$, and~$\beta_0$ a shortest arc of
$\beta$ connecting~$x$ and~$y$.

Suppose~$\alpha_0\sim\beta_0$ (i.e., the union~$\alpha_0\cup\beta_0$
is~$\Z_2$-nullhomologous).  Then either~$\alpha$ or~$\beta$ can be
shortened by replacing its arc between~$x$ and~$y$ by the
corresponding arc of the other loop, and rounding off the corners.
This contradicts the minimality of the basis~$[\alpha],[\beta]$.

Hence~$\alpha_0\cup\beta_0$ represents a nonzero class
in~$H_1(T^2;\Z_2)$.  By construction
$|\alpha_0\cup\beta_0|\leq|\beta|$.  Let~$\delta$ be a minimizing loop
in the class~$[\alpha_0\cup\beta_0]\in H_1(T^2;\Z_0)$.  Then
$|\delta|<|\beta|$.  Now \eqref{e12} implies that 
\begin{equation}
\label{e13}
\delta\in [\alpha]=[\alpha_0\cup\beta_0],
\end{equation}
and therefore~$|\delta|=|\alpha|$.

Consider the complementary subarcs~$\alpha_1=\alpha\setminus\alpha_0$
and~$\beta_1=\beta\setminus\beta_0$.  By~\eqref{e13} we have
$\alpha_1\sim\beta_0$.  If~$|\beta_0|<|\alpha_1|$ then~$\alpha$ can be
shortened to~$\alpha_0\cup\beta_0$, contradicting the minimality of
the loop~$\alpha$ in its~$\Z_2$-homology class; if
$|\beta_0|>|\alpha_1|$ then~$\beta$ can be shortened to
$\alpha_1\cup\beta_1$, contradicting the minimality of the loop
$\beta$ in its~$\Z_2$-homology class (the case of equality is treated
by rounding off corners as before).  The contradiction proves that
$\alpha$ and~$\beta$ generate a basis for integer homology.
\end{proof}

Note that the conclusion of Corollary~\ref{l12} may fail for the Klein
bottle; cf.\ Example~\ref{e33}.

\begin{definition}
Let~$p=\alpha\cap\beta\in T^2$ be the intersection point of the two
loops.
\end{definition}

Thus, the loops~$\alpha$ and~$\beta$ represent generators of
$\pi_1(T^2,p)\simeq\Z^2$.

\begin{definition}
\label{dd15}
Choosing a parametrisation for each of the loops based at~$p$, we form
the \emph{commutator
loop}~$\alpha\circ\beta\circ\alpha^{-1}\circ\beta^{-1}$ based at~$p$.
\end{definition}

\section
{Circle noncollapse}
\label{s2}

\begin{lemma}
Suppose there exists a metric~$d$ on the disjoint union of a
circle~$S$ of length~$L$ and a segment~$I$ such that both~$S$ and~$I$
are strongly isometrically embedded.  Assume that~$S$ is contained in
a~$\nu$-neighborhood of~$I$. Then~$\nu > \frac{L}{16}$.
\end{lemma}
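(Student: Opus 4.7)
The plan is to exhibit four carefully chosen points on $S$ whose images in $I$ cannot fit into a one-dimensional segment as required. I parametrize $S$ by arc length and pick $p_0, p_1, p_2, p_3 \in S$ at arc parameters $0, L/4, L/2, 3L/4$. Because $S$ is strongly isometrically embedded, the ambient distances equal the intrinsic circle distances: $d(p_0, p_2) = d(p_1, p_3) = L/2$, while each cyclically adjacent pair satisfies $d(p_i, p_{i+1}) = L/4$.

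For each $i$, I select $q_i \in I$ with $d(p_i, q_i) < \nu$. The triangle inequality yields
\[
d(p_i, p_j) - 2\nu \; < \; d(q_i, q_j) \; < \; d(p_i, p_j) + 2\nu.
\]
Since $I$ is strongly isometrically embedded, these distances are computed inside the one-dimensional segment. Identifying $I$ with a subinterval of $\R$ and normalizing so that $q_0 = 0$ with $q_2 > 0$, the inequality $q_2 > L/2 - 2\nu$ together with $|q_1| < L/4 + 2\nu$ and $|q_1 - q_2| < L/4 + 2\nu$ confines $q_1$ to an interval of length $6\nu$ clustered near $L/4$. The identical constraints apply to $q_3$, so $|q_1 - q_3| < 6\nu$. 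But $|q_1 - q_3| > L/2 - 2\nu$, forcing $L/2 - 2\nu < 6\nu$, that is, $\nu > L/16$.

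The step that demands the most care is the final one-dimensional argument: one must verify that the two pairs of antipodal images $(q_0, q_2)$ and $(q_1, q_3)$, each separated by nearly $L/2$ on the line, cannot coexist with all four cyclically adjacent distances bounded by $L/4 + 2\nu$. The constant $1/16$ is optimized through this four-point configuration; an analogous argument with three equally spaced points on $S$ yields only $\nu > L/18$, so it is essential to use four points forming two antipodal pairs.
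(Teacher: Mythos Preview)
Your proof is correct and follows essentially the same route as the paper: both arguments choose the same four equally spaced points on $S$ (the paper calls them $a,c_1,b,c_2$), project them to nearby points on $I$, and exploit the one-dimensionality of $I$ to force $L/2<8\nu$. The only cosmetic difference is that the paper introduces the midpoint $c'$ of the segment $[a',b']$ and bounds $d(c',c'_i)<3\nu$ before applying the triangle inequality, whereas you work directly in a linear coordinate on $I$ to trap both $q_1$ and $q_3$ in the interval $(L/4-4\nu,\,L/4+2\nu)$; the arithmetic is the same.
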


\begin{proof}
Let~$a, b \in S$ be two antipodal points on~$S$.  Let~$J_1$ and~$J_2$
be the two arcs of~$S$ connecting~$a$ and~$b$.  Clearly
$|J_1|=|J_2|=\frac{L}{2}$.  Both~$J_1$ and~$J_2$ are strongly
isometrically embedded in~$S \cup I$.  Choose~$a',b'\in I$ such
that~$d(a, a')<\nu$ and~$d(b, b') < \nu$.  Clearly~$|d(a', b') -
\frac{L}{2}| < 2\nu$.  Let~$c_1$ and~$c_2$ be the midpoints of the
arcs~$J_1$ and~$J_2$, respectively.  One can find points~$c'_1, c'_2
\in I$ such that~$d(c_1, c'_1) < \nu$ and~$d(c_2, c'_2) < \nu$.  We
have
\[ 
|d(a', c'_i) - \tfrac{L}{4}|<2\nu,\;|d(b',c'_i)-\tfrac{L}4|<2\nu\text{
 for } i = 1, 2.
\] 
It follows that
\[
|d(a',c'_i)-\tfrac12 d(a',b')|<3\nu,\;|d(b',c'_i)-\tfrac12 d(a',b')| <
  3\nu \text{ for }, i = 1,2.
\] 
Let~$c'$ be the midpoint of the segment~$[a', b']$.
Then~$d(c',c'_i)<3\nu$ for~$i = 1, 2$.  Hence~$\frac{L}2 = d(c_1,c_2)
\leq d(c1, c'_1) + d(c'_1, c') + d(c', c'_2) + d(c_2, c'_2)< 8\nu$.
\end{proof}

With little extra effort, one can prove a near-optimal bound of the
same type; see Lemma~\ref{l21}.

\begin{definition}
Let~$S^1$ be the Riemannian circle of length~$2\pi$. 
\end{definition}

Recall \cite{Ka83} that the filling radius of~$S^1$ equals
\begin{enumerate}
\item
$\frac16$ of its length, 
\item
namely~$\frac{\pi}{3}$,
\item
half the (spherical) diameter of the equilateral triangle inscribed
in~$S^1$.
\end{enumerate}
For recent applications of the filling radius, see \cite{Li20}.

\begin{lemma}
\label{l21}
Let~$I\subseteq\R$ be a segment.  The circle~$S^1$ cannot be strongly
isometrically embedded in a~$\frac{\pi}{3}$-neighborhood of~$I$ in any
metric space~$Z$.
\end{lemma}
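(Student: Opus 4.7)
I argue by contradiction. Assume $S^1$ is strongly isometrically embedded in $U_{\pi/3}(I)\subseteq Z$; by compactness of $S^1$, fix some $\epsilon<\pi/3$ with $S^1\subseteq U_\epsilon(I)$. The strategy is to exhibit a singular $2$-chain in $L^\infty(S^1)$ bounding the Kuratowski image $\kappa(S^1)$ and supported in $U_\epsilon(\kappa(S^1))$, which forces $\mathrm{FillRad}(S^1)\le \epsilon<\pi/3$, contradicting the value $\pi/3$ recalled above.

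To this end, pass to $L^\infty(S^1)$: the Kuratowski embedding $\kappa\colon S^1\hookrightarrow L^\infty(S^1)$ extends to a $1$-Lipschitz map $\Phi\colon Z\to L^\infty(S^1)$ by $\Phi(z)(s)=d_Z(z,s)-d_Z(z_0,s)$. The image $J=\Phi(I)$ is a contractible arc (isometric to $I$), and $\kappa(S^1)\subseteq U_\epsilon(J)$. Restrict to the \emph{active part} $J^\ast=\{y\in J:\dist(y,\kappa(S^1))\le\epsilon\}$, which still satisfies $\kappa(S^1)\subseteq U_\epsilon(J^\ast)$ and additionally $J^\ast\subseteq U_\epsilon(\kappa(S^1))$. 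Granting that $J^\ast$ is a connected sub-arc and fixing a continuous selection $\tilde p\colon S^1\to J^\ast$ with $\|\kappa(s)-\tilde p(s)\|_\infty<\epsilon$, assemble a $2$-chain in two stages: first, the straight-line cylinder $(s,t)\mapsto(1-t)\kappa(s)+t\tilde p(s)$, each of whose points lies within $t\epsilon\le \epsilon$ of $\kappa(s)\in\kappa(S^1)$, hence in $U_\epsilon(\kappa(S^1))$; second, a null-homotopy of the loop $\tilde p(S^1)$ inside the contractible arc $J^\ast\subseteq U_\epsilon(\kappa(S^1))$. Concatenation of the two stages yields the desired filling.

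The main obstacle is the production of the continuous selection $\tilde p$ landing in a single connected subinterval of $J^\ast$. A priori $J^\ast$ can be disconnected: a sub-arc of $I$ whose points lie at distance greater than $\epsilon$ from $S^1$ might separate two components of $J^\ast$ each targeted by different portions of $S^1$. The resolution exploits the connectedness of $S^1$: when the gap between two components of $J^\ast$ is at least $2\epsilon$, the neighborhood $U_\epsilon(J^\ast)$ splits into a disjoint union and $S^1$ lies entirely in one component, reducing to the connected case; when the gap is smaller, one must invoke a more delicate argument, in the spirit of the Borsuk--Ulam principle for the antipodal map on $S^1$, to control the transitions. Once $J^\ast$ has been replaced by a single sub-arc, a partition-of-unity construction on $S^1$ — keeping $\tilde p(s)$ in the sub-arc via the convexity of subintervals of $I\subseteq\mathbb{R}$ — delivers the required continuous selection, completing the construction and hence the contradiction.
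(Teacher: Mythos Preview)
Your approach via the filling radius is conceptually natural given the recalled value $\mathrm{FillRad}(S^1)=\pi/3$, but it is genuinely different from the paper's and, as written, incomplete. The paper's proof is entirely elementary: it builds a piecewise-linear map $f\colon S^1\to I$ by sending the vertices of a very fine inscribed polygon to nearest points of $I$ and interpolating linearly; Borsuk--Ulam then produces a pair of (nearly) antipodal vertices $a,b$ whose images in $I$ lie at distance at most $\nu$ from one another, and the triangle inequality gives $\pi\approx d_Z(a,b)\le d_Z(a,f(a))+d_I(f(a),f(b))+d_Z(f(b),b)\le 3\nu$ directly. No $L^\infty$, no filling chains, no selection theorem.

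The gap in your plan sits exactly where you flag it, and it is not a technicality. A continuous selection $\tilde p\colon S^1\to J^\ast$ with $\|\kappa(s)-\tilde p(s)\|_\infty<\epsilon$ need not exist: for fixed $s$ the set $\{y\in I: d_Z(s,y)<\epsilon\}$ is in general \emph{not} an interval (nothing forces $d_Z(s,\cdot)$ to be convex along $I$), so a partition-of-unity average of nearby choices $y_i\in I$ can land at a point of $I$ at distance $\ge\epsilon$ from $s$. A careful version of your averaging only yields $d_Z(s,\tilde p(s))<2\epsilon$ or $3\epsilon$, which feeds back into a non-sharp inequality of the $L/16$ type proved just before this lemma, not the optimal $\pi/3$. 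The ``small-gap'' case you defer to ``a more delicate argument in the spirit of Borsuk--Ulam'' is therefore not peripheral but the crux---and once you actually invoke Borsuk--Ulam on a map $S^1\to I$ to locate antipodal points with coinciding image, the triangle inequality finishes in one line, rendering the $L^\infty$/filling scaffolding superfluous.
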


\begin{proof}
Suppose~$S^1$ is contained in an open~$\nu$-neighborhood of~$I$ in a
metric space~$(Z,d_Z)$.  It will be convenient to use the language of
infinitesimals, infinite proximity, etc.  These terms can either be
understood as shorthand for~$\epsilon, \delta$ arguments, or can be
interpreted literally in terms of a formalisation using an
infinitesimal-enriched continuum as in \cite{15d} following
\cite{Ro66}.  We view~$S^1$ as an infinilateral polygon.  We send each
vertex of the polygon to a nearest point of~$I\subseteq Z$.  We extend
by linearity to obtain a piecewise linear map~$f\colon S^1\to I$.

By the Borsuk--Ulam theorem, there is a pair of opposite points of
$S^1$ with the same image in~$I$ under~$f$.  Consider the
infinitesimal sides~$aa'\subseteq S^1$ and~$bb'\subseteq S^1$
containing these opposite points of~$S^1$.  Then their images overlap,
i.e.,
\begin{equation}
\label{e21b}
f(aa') \cap f(bb')\not=\emptyset.
\end{equation}
By strong isometry and triangle inequality, we have
\begin{equation}
\label{e21}
d_Z(a,b)\approx \pi,
\end{equation}
where~$\approx$ denotes the relation of infinite proximity.
Let~$AA'=f(aa')$ and~$BB'=f(bb')$.  Then~$|AA'| < 2\nu$
and~$|BB'|<2\nu$.  The overlap condition~\eqref{e21b} means that the
segments~$AA', BB' \subseteq I$ overlap.  Hence one of the images,
say~$BB'$, must contain a vertex, say~$A$, of the image~$AA'$ of the
other side, so that~$A\in BB'$.  We can assume that~$B\in I$ is the
endpoint of the side~$BB'$ closest to the point~$A$.
Then~$d_Z(A,B)\leq \nu$.  Thus~$d_Z(a,b) \leq d_Z(a, A) + d_Z(A,B) +
d_Z (B,b) \leq \nu+\nu+\nu= 3\nu$.  Finally the relation \eqref{e21}
implies~$\nu\geq \frac{\pi}{3}$.
\end{proof}

\section{Separating geodesic loop}

We choose loops~$\alpha,\beta\subseteq T^2$ as in Section~\ref{ss1},
and let~$p=\alpha\cap\beta$.  A loop in~$T^2$ is called
\emph{separating} if its complement is~$T^2$ has two connected
components.

\begin{lemma}
\label{l31}
Suppose a Riemannian torus~$T^2$ is at Gromov--Hausdorff distance at
most~$\nu$ from a segment~$I$.  If~$|I|> 52 \nu$ then there exists an
embedded separating geodesic loop~$\lambda_p$ in~$T^2$ based at the
point~$p$ (possibly non-smooth at~$p$) and of length less
than~$24\nu$.
\end{lemma}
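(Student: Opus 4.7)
My plan is to bound the lengths of the short homology basis loops $\alpha$ and $\beta$ via Lemma~\ref{l21}, then to extract an embedded separating geodesic loop at~$p$ from the null-homotopic commutator $\alpha\beta\alpha^{-1}\beta^{-1}$.

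First, since $T^2$ is at Gromov--Hausdorff distance at most~$\nu$ from~$I$, one can realize this distance by a metric on $T^2\sqcup I$ in which both pieces are strongly isometrically embedded and each lies in the $\nu$-neighborhood of the other. By Lemma~\ref{l11}, the basis loops $\alpha$ and $\beta$ are strongly isometrically embedded in $T^2$, hence also in the combined space; the hypothesis $|I|>52\nu$ leaves plenty of room to contain their short images. Rescaling Lemma~\ref{l21} (so that a circle of length $L$ cannot lie strongly isometrically in an $(L/6)$-neighborhood of a segment), I conclude $|\alpha|\leq 6\nu$ and $|\beta|\leq 6\nu$.

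Next, the commutator loop $c=\alpha\beta\alpha^{-1}\beta^{-1}$ based at~$p$ from Definition~\ref{dd15} represents the identity in the abelian group $\pi_1(T^2)\cong\mathbb{Z}^2$, hence is null-homotopic in $T^2$ and bounds a disk; its length is $2(|\alpha|+|\beta|)\leq 24\nu$. The loop $c$ is not embedded since it passes through $p$ four times, but $T^2\setminus(\alpha\cup\beta)$ is a single open $2$-cell, realized as the interior of a closed disk $\bar D$ attached along its boundary via $c$, with all four corner vertices of the boundary identified to~$p$. An embedded arc in $\bar D$ running from one corner vertex back to itself and enclosing the interior pushes forward to a simple closed curve in $T^2$ through~$p$ that bounds a disk, hence is null-homotopic and therefore separating. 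I then take $\lambda_p$ to be a length minimizer among embedded separating loops based at~$p$; by a standard Arzel\`a--Ascoli plus lower semicontinuity of length argument such a minimizer exists, and it must be a geodesic away from~$p$ (otherwise a local shortening preserves both embedding and the bounding-a-disk property).

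The main technical hurdle will be establishing the strict inequality $|\lambda_p|<24\nu$ and justifying that minimizers exist and remain embedded. Strict inequality should follow from one of two routes: either sharpening Lemma~\ref{l21} to the open setting (since the $\nu$-neighborhood is open, the proof yields $|\alpha|,|\beta|<6\nu$ strictly, whence $|c|<24\nu$), or observing that at least one of the four corners of $\bar D$ at~$p$ has interior angle less than~$\pi$ (since the four angles sum to~$2\pi$), so an inscribed arc cutting across such a corner achieves strict improvement over the perimeter. Degeneration of the minimizer to a point is ruled out by the positive area enclosed by any embedded separating loop based at~$p$.
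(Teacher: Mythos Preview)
Your argument contains a genuine gap at the minimization step. You propose to take $\lambda_p$ as a length minimizer among \emph{embedded separating loops based at $p$}. But every embedded null-homotopic loop on $T^2$ is separating (it bounds a disk on one side and a once-punctured torus on the other), so your competition class contains arbitrarily small round circles through~$p$. The infimum of lengths in this class is~$0$ and no nontrivial minimizer exists; your final sentence, that degeneration is ruled out by ``positive area enclosed'', is simply false, since that area can be made as small as one likes. Relatedly, you never make essential use of the hypothesis $|I|>52\nu$: you mention it only to say there is ``plenty of room'', but it plays no role in your minimization.

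The paper's proof repairs exactly this. The hypothesis $|I|>52\nu$ is used to locate a point $q\in T^2$ with $\dist(p,q)>24\nu$, obtained by projecting the far endpoint of $I$ back to the torus. One then cuts $T^2$ open along $\alpha\cup\beta$ to obtain a convex parallelogram $\Pi$ (with all four corners identified to~$p$) and minimizes over loops based at a corner that are \emph{noncontractible in $\Pi\setminus\{q\}$}. This class is nonempty because $\partial\Pi$ (the commutator, of length $\le 24\nu$) winds once around~$q$, and it cannot degenerate because any loop in the class must enclose the puncture~$q$, which lies more than $24\nu$ away from~$p$. The resulting geodesic loop is then shown to be simple via a short universal-cover argument, and convexity of~$\Pi$ keeps it in the interior so that it projects to an embedded separating loop on~$T^2$. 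In short, the distant puncture~$q$ is the missing ingredient that both anchors the minimization and consumes the numerical hypothesis on~$|I|$.
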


\begin{proof}
By Lemma~\ref{l11}, the embedding of the loops~$\alpha,\beta\subseteq
T^2$,~$|\alpha|\leq|\beta|$, is strongly isometric.  By
Lemma~\ref{l21}, we have
$\nu\geq\frac{\pi}{3}\frac{|\beta|}{2\pi}=\frac{|\beta|}{6}$, and
therefore
\begin{equation}
\label{e31c}
|\alpha|\leq|\beta|\leq6\nu.
\end{equation}
Once we have such a ``short'' pair of loops on~$T^2$, we have some
indication that~$T^2$ ``looks like'' a small flat torus with a
Gromovian ``long finger;'' cf.\ \cite[pp.\;297, 321, 324, 327]{Gr96},
\cite[p.\;243]{Gr99}.

Suppose~$T^2$ and~$I$ are at GH distance at most~$\nu$ in an ambient
metric space~$Z$.  Choose a point~$P\in I$ nearest to~$p\in
T^2\subseteq Z$.  Let~$Q\in I$ be an endpoint of~$I$ furthest away
from~$P$, so that~$|PQ| >26\nu$ by hypothesis.  Let~$q\in T^2$ be a
point nearest to~$Q$.  The point ~$q\in T^2$ can be thought of as the
end of the long finger.  Then
\begin{equation}
\label{e31b}
\dist(p,q)>24\nu.
\end{equation}
Consider the commutator
loop
\begin{equation}
\label{e30}
\sigma_p=\alpha\circ\beta\circ\alpha^{-1}\circ\beta^{-1}
\end{equation}
based at~$p=\alpha\cap\beta$ as in Definition~\ref{dd15}.  By
\eqref{e31c}, the length of the commutator loop is at most
\begin{equation}
\label{e32}
|\sigma_p| \leq 4|\beta|\leq 24\nu.
\end{equation}
It follows from \eqref{e31b} that~$q$ cannot be reached by the
commutator loop~$\sigma_p$, or by any shorter loop based at~$p$.

We now cut the torus open along~$\alpha$ and~$\beta$.  This produces a
convex ``parallelogram''~$\Pi=\Pi_{\alpha,\beta}$.  In the
complement~$\Pi\setminus\{q\}$, consider a shortest noncontractible
loop~$\lambda$ based at a corner of~$\partial{\Pi}$.  In particular,
$|\lambda|<|\sigma_p|$.  Then~$\lambda$ is simple (see below), remains
far away from the missing point~$q$, and stays in the interior
of~$\Pi_{\alpha,\beta}$ (except for the basepoint) due to the
convexity of~$\Pi_{\alpha,\beta}$.  Such a loop projects to a
separating embedded loop~$\lambda_p\subseteq T^2$.

To construct such a loop~$\lambda$, consider the universal cover~$U$
of~$\Pi\setminus\{q\}$.  Let~$P$ be a lift of~$p$ to~$U$.  Let~$P'\in
U$ be a nearest point to~$P$ in the orbit of~$P$ under the action
of~$\pi_1\big(\Pi\setminus\{q\}, p\big)\simeq\Z$ on~$U$.  Let
$\tilde\lambda\subseteq U$ by a minimizing path from~$P$ to~$P'$.  Let
$\lambda\subseteq\Pi\setminus\{q\}$ be the geodesic loop based at~$p$
obtained as the projection of~$\tilde \lambda$.  Suppose~$\lambda$ is
not simple.  Then there is a translated path~$RR'$
meeting~$\tilde\lambda=PP'$ transversely at an interior point
$X=PP'\cap RR'$.  Of the~$4$ points~$P,P',R,R'$ consider the nearest
one to~$X$.  We can assume without loss of generality that this point
is~$P$.  

One of the points~$R,R'$ is necessarily distinct from~$P$.  We can
assume without loss of generality that this point is~$R$.  Consider
the broken geodesic path~$PX\cup XR$.  Its projection to $\Pi\setminus
\{q\}$ is noncontractible, since~$P\not=R$.  By construction,~$|PX\cup
XR|\leq|\lambda|$.  By rounding off the corner at~$X$, we obtain a
path connecting~$P$ to~$R$ of length strictly shorter
than~$|\lambda|$.  This contradicts the choice of~$\tilde \lambda$ and
proves that~$\lambda$ is a noncontractible simple loop of
$\Pi\setminus\{q\}$.

Alternatively, one could start with the boundary~$\partial\Pi$ viewed
as a loop based at a corner of~$\Pi_{\alpha,\beta}$, and apply
Sabourau's modification \cite{Sa04} of the curve-shortening process of
Hass--Scott \cite{Ha94}.  The process preserves embeddedness and
yields an embedded geodesic loop.
\end{proof}

Note that the loop~$\lambda_p$ of Lemma~\ref{l31} is not necessarily a
closed geodesic of~$T^2$, as it may be nonsmooth at~$p$.

\section{Torus non-collapse}

We now show that a~$2$-torus cannot collapse to a segment under the
constraint \mbox{$K\geq-1$}.

\begin{theorem}
There is no sequence of~$2$-torii equipped with smooth Riemannian
metrics with Gauss curvature bounded from below, which converges in
the Gromov--Hausdorff sense to a (non-degenerate) segment.
\end{theorem}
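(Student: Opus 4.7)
The plan is to combine the short separating geodesic loop supplied by Lemma~\ref{l31} with the Gauss--Bonnet theorem and a Bishop--Gromov area estimate.

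Suppose by contradiction that such a sequence~$T^2_i$ exists. After rescaling we may assume~$K_i\geq -1$; let~$\nu_i\to 0$ denote the GH-distance from~$T^2_i$ to the segment~$I$. For~$i$ large enough that~$|I|>52\nu_i$, Lemma~\ref{l31} yields an embedded separating geodesic loop~$\lambda_i\subseteq T^2_i$ based at~$p_i$, of length~$<24\nu_i$, smooth except possibly at~$p_i$. Since any separating simple closed curve on~$T^2$ is nullhomologous and therefore (by commutativity of~$\pi_1(T^2)$) nullhomotopic, the loop~$\lambda_i$ bounds an embedded disk~$D_i\subseteq T^2_i$ with~$\partial D_i=\lambda_i$.

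I would then apply Gauss--Bonnet to~$D_i$: with geodesic boundary and at most one corner at~$p_i$ of interior angle~$\alpha_i\in(0,2\pi)$,
\[
\int_{D_i}K\,dA+(\pi-\alpha_i)=2\pi,\qquad\text{hence}\qquad\int_{D_i}K\,dA=\pi+\alpha_i\geq\pi.
\]
Combined with~$\int_{T^2_i}K\,dA=0$ (Gauss--Bonnet on the torus), this gives~$\int_{T^2_i\setminus D_i}K\,dA\leq-\pi$; using the hypothesis~$K\geq -1$ then forces
\[
\operatorname{Area}(T^2_i)\geq\operatorname{Area}(T^2_i\setminus D_i)\geq\pi.
\]

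To close the contradiction I would show~$\operatorname{Area}(T^2_i)\to 0$. In the ambient metric realizing the GH coupling, a~$\nu_i$-net on~$I$ (of size~$\leq|I|/\nu_i+1$) transfers to points~$\{y_k\}\subseteq T^2_i$ whose intrinsic~$3\nu_i$-balls cover~$T^2_i$; Bishop--Gromov in dimension two bounds each such ball's area by~$2\pi(\cosh(3\nu_i)-1)=O(\nu_i^2)$, so~$\operatorname{Area}(T^2_i)=O(|I|\nu_i)\to 0$, contradicting the lower bound~$\pi$ obtained above. The main obstacle I anticipate is the disk-bounding step---verifying that the separating loop from Lemma~\ref{l31} really bounds an embedded disk in~$T^2_i$---while everything else is robust: the Gauss--Bonnet estimate is insensitive to the exact value of~$\alpha_i$, and the Bishop--Gromov collapse of area is routine once the GH coupling is written down.
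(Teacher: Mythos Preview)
Your proposal is correct and follows essentially the same approach as the paper; the paper applies Gauss--Bonnet directly to the once-punctured torus $M=T^2\setminus D$ (obtaining $\int_M K\,dA\leq-\pi$) rather than to $D$, but this is equivalent to your computation via the global identity $\int_{T^2}K\,dA=0$. Your disk-bounding worry is unfounded: the paper simply records that the separating loop divides $T^2$ into a disk and a once-holed torus, which is immediate from the classification of compact orientable surfaces with boundary (the two pieces have Euler characteristics summing to $0$, forcing genera $0$ and $1$).
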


\begin{proof}
Suppose there is such a sequence of metrics.  For metrics sufficiently
close to the segment, the hypothesis of Lemma~\ref{l31} is satisfied.
Then the loop~$\lambda_p$ of Lemma~\ref{l31} separates~$T^2$ into a
disk (containing~$q$) and a torus-with-boundary~$M$ in the homotopy
type of~$S^1\vee S^1$.  The total geodesic curvature of~$\lambda_p$ is
concentrated at the point~$p$ and bounded by~$\pi$.  We apply the
Gauss--Bonnet theorem to the surface~$M$ to obtain
\begin{equation}
\label{e31}
\begin{aligned}
\int_M K\,dA =2\pi\chi(M)-\int_{\lambda_p}k_g(s)\;ds
=-2\pi-\int_{\lambda_p}k_g(s)\;ds \leq -\pi.
\end{aligned}
\end{equation}
To show that the curvature has to be very negative somewhere
in~$M\subseteq T^2$, it therefore suffices to show that the area
of~$T^2$ must tend to zero as the torus collapses to~$I$.  Indeed,
consider a fine partition of~$I$ with step~$\nu$.  For each partition
point~$P_k\in I$, choose a nearest point~$p_k\in T^2$.  If~$T^2$ is
contained in a~$\nu$-neighborhood of~$I$, then by the triangle
inequality, disks of radius~$3\nu$ centered at
the~$n=\left\lfloor\frac{|I|}{\nu}\right\rfloor$ points~$p_k$ cover
the torus:
\[
T^2=\cup_{k=1}^n B_{3\nu}(p_k).
\]
The lower bound~$K\geq-1$ implies an upper bound of type~$C\nu^2$ on
the area of the disks, where~$C>\pi$ can be chosen as close to~$\pi$
as one wishes by choosing~$\nu$ sufficiently small.  It follows that
the area of the torus collapsing to the segment~$I$ is bounded by
$\frac{|I|}{\nu} C\nu^2 = |I|C\nu$ and hence must tend to zero
with~$\nu$.  Then the estimate~\eqref{e31} implies that the Gaussian
curvature must be very negative somewhere in~$M\subseteq T^2$,
contradicting the hypothesis of collapse of~$T^2$ to~$I$ constrained
by the condition~$K\geq-1$.  The contradiction proves the theorem.
\end{proof}

\section{Klein bottle collapse}

\begin{example}
\label{e33}
Unlike the torus, the Klein bottle can collapse to a segment.
Let~$\epsilon>0$.  Consider the maps~$f\colon
\R^2\to\R^2$,~$(x,y)\mapsto(x+2,y)$ and~$g\colon
\R^2\to\R^2$,~$(x,y)\mapsto(-x,y+\epsilon)$.  As a fundamental domain
one can choose the rectangle~$[-1,1)\times[0,\epsilon)$.  Then the
flat Klein bottle~$K=\R^2/\langle f;g\rangle$ collapses to the
segment~$[0,1]\times\{0\}$ when~$\epsilon\to0$.  Here the shortest
basis for~$H_1(K;\Z_2)\simeq\Z_2^2$ is given by the disjoint loops
parametrized by~$\{0\}\times [0,\epsilon)$
and~$\{1\}\times[0,\epsilon)$.
\end{example}

\begin{theorem}
If the Klein bottle~$K$ collapses to a segment then there is a uniform
positive lower bound for the minimal distance between the loops of a
short basis for~$H_1(K;\Z_2)$.
\end{theorem}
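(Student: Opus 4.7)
The plan is to argue by contradiction, passing to the orientation double cover of the Klein bottle and invoking the torus non-collapse theorem of the previous section. Suppose for contradiction that there exists a sequence of Riemannian Klein bottles $K_n$ with Gauss curvature bounded below by $-1$, converging in Gromov--Hausdorff sense to the nondegenerate segment $I$, with GH distances $\nu_n \to 0$, and whose shortest bases $\gamma_1^n, \gamma_2^n$ of $H_1(K_n;\Z_2)$ satisfy $\delta_n := d_{K_n}(\gamma_1^n, \gamma_2^n) \to 0$. By Lemma~\ref{l11} and Lemma~\ref{l21}, each $\gamma_i^n$ is a strongly isometrically embedded circle of length at most $6\nu_n$.

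The key step is to pass to the orientation double cover $\pi_n \colon T_n \to K_n$, a torus inheriting the curvature bound, equipped with the free deck involution $\sigma_n$. By Gromov precompactness applied to tori with $K \geq -1$ and bounded diameter, there is a subsequence along which $(T_n, \sigma_n)$ converges in equivariant GH sense to a pair $(C, \sigma_C)$ with $C/\sigma_C = I$. The torus non-collapse theorem prohibits $C$ from being a segment; combined with the one-dimensional structure of GH limits of tori with bounded curvature (necessarily a circle or an interval) and the diameter lower bound $\diam(T_n) \geq |I|$, the space $C$ must be a circle of length $2|I|$ and $\sigma_C$ the reflection whose two fixed points are antipodal at distance $|I|$.

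I would then derive a lower bound on representatives of the orientation-preserving class $[b] \in H_1(K_n;\Z_2)$. In $\pi_1(T_n) = \Z\langle a^2\rangle \oplus \Z\langle b\rangle$, the involution $\sigma_n$ acts trivially on $[a^2]$ and by inversion on $[b]$, while $\sigma_C$ acts as $-1$ on $H_1(C;\Z) = \Z$. The induced homomorphism $H_1(T_n;\Z) \to H_1(C;\Z)$, surjective by approximating a generator of $\pi_1(C)$ with a loop in $T_n$, must therefore send $[a^2] \mapsto 0$ and $[b] \mapsto \pm 1$. Hence any loop in $T_n$ representing a class $N[b] + M[a^2]$ with $N$ odd has length at least $2|I| - o(1)$ as $n \to \infty$. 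Any representative in $K_n$ of $[b]$ is orientation-preserving and lifts to such a loop in $T_n$ of the same length; for $n$ large this exceeds $6\nu_n$, so neither basis class $[\gamma_i^n]$ can equal $[b]$. Both basis loops must therefore be orientation-reversing, and $[\gamma_1^n] + [\gamma_2^n] = [b]$.

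Finally, I would form the concatenation: let $\eta^n$ be a path from $\gamma_1^n$ to $\gamma_2^n$ of length at most $\delta_n + \nu_n$, and set $\mu^n := \gamma_1^n \cdot \eta^n \cdot \gamma_2^n \cdot (\eta^n)^{-1}$. Then $|\mu^n| \leq 14\nu_n + 2\delta_n$ and $[\mu^n] = [b]$ in $H_1(K_n;\Z_2)$. The previous lower bound yields $14\nu_n + 2\delta_n \geq 2|I| - o(1)$, so $\delta_n \geq |I|/2$ for $n$ large, contradicting $\delta_n \to 0$. The main obstacle is the second paragraph's identification of the GH limit $C$ as a circle of length $2|I|$: the torus non-collapse theorem is used essentially to exclude the segment case, and care is needed in verifying equivariant compatibility under GH convergence.
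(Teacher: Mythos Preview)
Your approach is sound in outline but takes a genuinely different route from the paper.  The paper never passes to the orientation double cover; it simply reruns the Gauss--Bonnet argument of the torus section directly on~$K$.  If $\alpha\cap\beta\ne\emptyset$ one cuts $K$ along $\alpha,\beta$; if $\alpha\cap\beta=\emptyset$ the paper introduces a shortest loop $\gamma$ in the class $[\alpha]+[\beta]\in H_1(K;\Z_2)$, observes that this class is characteristic for the $\Z_2$-intersection form so $\gamma$ must meet $\alpha$, notes that $|\gamma|\to 0$ once $d(\alpha,\beta)\to 0$, cuts along the intersecting pair $\alpha,\gamma$, and applies Gauss--Bonnet to the one-holed Klein bottle bounded by the resulting short separating loop.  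This is entirely self-contained, using only the ingredients already assembled for the torus.

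Your route trades that for heavier structure theory.  Two points need tightening.  First, the parenthetical ``one-dimensional structure of GH limits of tori with bounded curvature (necessarily a circle or an interval)'' is false as a general claim; what actually forces $\dim C=1$ here is $C/\sigma_C=I$, and then $C$ is a $1$-dimensional Alexandrov space, hence a circle or interval.  Second, Gromov--Hausdorff convergence alone does not produce an ``induced homomorphism'' $H_1(T_n;\Z)\to H_1(C;\Z)$; you need Yamaguchi's fibration theorem (cited in the paper as~\cite{r6}) to obtain an actual map $f_n\colon T_n\to C$.  The equivariance you need then follows because $\sigma_n$, being an isometry, permutes the metrically characterised short fibers and so preserves the primitive fiber class; since the $+1$-eigenspace of $\sigma_{n*}$ on $H_1(T_n;\Z)$ is exactly $\Z[a^2]$, the fiber class is $\pm[a^2]$ and $[b]\mapsto\pm1$.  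With these points supplied the argument goes through and in fact yields the sharp asymptotic $\delta_n\ge |I|-o(1)$, matching Example~\ref{e33}; the paper's direct argument is more elementary but does not isolate an explicit constant.
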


\begin{proof}
Let~$\alpha,\beta$ be such a short basis.  If~$K$ collapses to a
segment then~$|\alpha|\to0$ and~$|\beta|\to0$ as before, based on
Section~\ref{s2}.  If there is a subsequence such that
$\alpha\cap\beta\not=\emptyset$ then we cut~$K$ open along~$\alpha$
and~$\beta$ to form a convex parallelogram~$\Pi_{\alpha,\beta}$ as in
the case of the torus (the identification along the boundary is
different for~$K$).  We find a distant point~$q\in\Pi_{\alpha,\beta}$,
consider a shortest noncontractible
loop~$\lambda\subseteq\Pi\setminus\{q\}$ based at a corner
of~$\Pi_{\alpha,\beta}$, project the loop to~$K$, and take the
connected component~$M\subseteq K\setminus\lambda$ not homeomorphic to
a disk.  Then~$M$ is a one-holed Klein bottle in the homotopy type of
$S^1\vee S^1$.  We then apply the Gauss--Bonnet formula to~$M$ as
in~\eqref{e31}, to show that collapse cannot occur in this case.

Thus we may assume that~$\alpha$ and~$\beta$ are disjoint.
Let~$\gamma$ be a shortest loop in the class~$[\alpha]+[\beta]\in
H_1(K;\Z_2)$.  Note that the class~$[\gamma]$ is a characteristic
vector for the~$\Z_2$-intersection form, i.e.,~$[\gamma]\cap x = x\cap
x$ for all~$x\in H_1(K;\Z_2)$.  In
particular,~$\alpha\cap\gamma\not=\emptyset$.

Suppose the minimal distance between~$\alpha$ and~$\beta$ tends to
zero.  Then the length~$|\gamma|$ tends to~$0$, as well.  We cut~$K$
open along~$\alpha$ and~$\gamma$ to form a convex
parallelogram~$\Pi=\Pi_{\alpha,\gamma}$ as before.  We find a distant
point~$q\in\Pi_{\alpha,\gamma}$, consider a shortest noncontractible
loop~$\lambda\subseteq\Pi\setminus\{q\}$ based at a corner
of~$\Pi_{\alpha,\gamma}$, take the connected component~$M\subseteq
K\setminus\lambda$ not homeomorphic to a disk, and apply the
Gauss--Bonnet formula to~$M$ as in \eqref{e31}, to show that collapse
cannot occur.  Therefore the minimal distance between~$\alpha$
and~$\beta$ cannot tend to zero.
\end{proof}

Recently Zamora \cite{Za20} generalized the non-collapsing result to
tori of arbitrary dimension.

\section*{Acknowledgments}  
We are grateful to Tahl Nowik, St\'ephane Sabourau, and the anonymous
referee for helpful comments on earlier versions of the article.

\end{document}